\newcommand{\executeiffilenewer}[3]{%
 \ifnum\pdfstrcmp{\pdffilemoddate{#1}}%
 {\pdffilemoddate{#2}}>0%
 {\immediate\write18{#3}}\fi%
}
\newcommand{%
 \executeiffilenewer{.svg}{.pdf}%
 {inkscape -z -D --file=.svg %
 --export-pdf=.pdf --export-latex}%
 \input{.pdf_tex}%
}[1]{%
 \executeiffilenewer{#1.svg}{#1.pdf}%
 {inkscape -z -D --file=#1.svg %
 --export-pdf=#1.pdf --export-latex}%
 \input{#1.pdf_tex}%
}
\theoremstyle{definition} 
 \newtheorem{definition}{Definition}[section]
 \newtheorem{example}[definition]{Example}
\theoremstyle{plain}      
 \newtheorem{proposition}[definition]{Proposition}
 \newtheorem{theorem}[definition]{Theorem}
 \newtheorem{lemma}[definition]{Lemma}
\newtheorem*{theorem*}{Theorem}
\newcommand{\R}{\mathbb{R}}
\newcommand{\C}{\mathbb{C}}
\newcommand{\Q}{\mathbb{Q}}
\newcommand{\T}{\mathbb{T}}
\newcommand{\Z}{\mathbb{Z}}
\renewcommand{\P}{\mathbb{P}}
\DeclareMathOperator{\tr}{Tr}
\DeclareMathOperator{\SL}{SL}
\DeclareMathOperator{\SU}{SU}
\DeclareMathOperator{\su}{su}
\DeclareMathOperator{\CS}{CS}
\DeclareMathOperator{\Hom}{Hom}
\DeclareMathOperator{\Map}{Map}
\DeclareMathOperator{\Res}{Res}
\DeclareMathOperator{\Mod}{Mod}
\DeclareMathOperator{\coker}{Coker}
\DeclareMathOperator{\Id}{Id}
\title{Distribution of Chern-Simons invariants}
\author{Julien March\'e}
\date{} 
\address{Institut de Math\'ematiques de Jussieu - Paris Rive Gauche, Universit\'e Pierre et Marie Curie, 75252 Paris c\'edex 05, France}
\email{julien.marche@imj-prg.fr}
\begin{document}
\date{}
\maketitle

\begin{abstract}
Let $M$ be a 3-manifold with a finite set $X(M)$ of conjugacy classes of representations $\rho:\pi_1(M)\to\SU_2$. We study here the distribution of the values of the Chern-Simons function $\CS:X(M)\to \R/2\pi\Z$. We observe in some examples that it resembles the distribution of quadratic residues. In particular for specific sequences of $3$-manifolds, the invariants tends to become equidistributed on the circle with white noise fluctuations of order $|X(M)|^{-1/2}$. We prove that for a manifold with toric boundary the Chern-Simons invariants of the Dehn fillings $M_{p/q}$ have the same behaviour when $p$ and $q$ go to infinity and compute fluctuations at first order. 
\end{abstract}

\section{Introduction}
\subsection{Distribution of quadratic residues}
Let $p$ be a prime number. We consider the weighted counting measure on the circle $\T=\R/2\pi\Z$ defined by quadratic residues modulo $p$, that is: 
$$\mu_p=\frac{1}{p}\sum_{k=0}^{p-1}\delta_{\frac{2\pi k^2}{p}}.$$
We investigate the limit of $\mu_p$ when $p$ goes to infinity and to that purpose, we consider its $\ell$-th momentum i.e $\mu_p^\ell=\int e^{i\ell\theta} d\mu_p(\theta)=\frac{1}{p}\sum_{k=0}^{p-1}\exp(2i\pi \ell k^2/p)$. We have $\mu_p^\ell=1$ if $p|\ell$, and else by the Gauss sum formula,  $\mu_p^\ell=\genfrac(){}{}{\ell}{p}\frac{1}{\sqrt{p}}$ where $\genfrac(){}{}{\ell}{p}$ is the Legendre symbol. 

This shows that $\mu_p$ converges to the uniform measure $\mu_\infty$ whereas the renormalized measure $\sqrt{p}(\mu_p-\mu_{\infty})$ -that we call fluctuation- has $l$-th momentum $\pm1$ depending on the residue of $l$ modulo $p$ and hence is a kind of ``white noise".

\subsection{Distribution of Chern-Simons invariants}

On the other hand, such Gauss sums appear naturally in the context of Chern-Simons invariants of 3-manifolds. Consider an oriented and compact 3-manifold $M$ and define its character variety as the set $X(M)=\Hom(\pi_1(M),\SU_2)/\SU_2$. In what follows, we will confuse between representations and their conjugacy classes. The Chern-Simons invariant may be viewed as a locally constant map $\CS:X(M)\to \T$. We refer to \cite{kirk-klassen} for background on Chern-Simons invariants and give here a quick definition for the convenience of the reader. 

Let $\nu$ be the Haar measure of $\SU_2$ normalised by $\nu(\SU_2)=2\pi$ and let $\pi:\tilde{M}\to M$ be the universal cover of $M$. There is an equivariant map $F:\tilde{M}\to \SU_2$ in the sense that $F(\gamma x)=\rho(\gamma)F(x)$ for all $\gamma\in \pi_1(M)$ and $x\in M$. The form $F^*\nu$ is invariant hence can be written $F^*\nu=\pi^*\nu_F$. We set $\CS(\rho)=\int_M \nu_F$ and claim that it is independent on the choice of equivariant map $F$ modulo $2\pi$. 

\begin{definition}
Let $M$ be a 3-manifold whose character variety is finite. We define its {\it Chern-Simons measure} as 
$\mu_M=\frac{1}{|X(M)|}\sum\limits_{\rho\in X(M)}\delta_{\CS(\rho)}$. 
\end{definition}
\subsubsection{Lens spaces}

For instance, if $M=L(p,q)$ is a lens space, then $\pi_1(M)=\Z/p\Z$ and $X(M)=\{\rho_n,n\in \Z/p\Z\}$ where $\rho_n$ maps the generator of $\Z/p\Z$ to a matrix with eigenvalues $e^{\pm \frac{2i\pi n}{p}}$. We know from \cite{kirk-klassen} that $\CS(\rho_n)=2\pi \frac{q^*n^2}{p}$ where $qq^*=1 \bmod p$. Hence, the Chern-Simons invariants  of $L(p,q)$ behave exactly like quadratic residues when $p$ goes to infinity. 

\subsubsection{Brieskorn spheres}
To give a more complicated but still manageable example, consider the Brieskorn sphere $M=\Sigma(p_1,p_2,p_3)$ where $p_1,p_2,p_3$ are distinct primes. This is a homology sphere whose irreducible representations in $\SU_2$ have the form $\rho_{n_1,n_2,n_3}$ where $0<n_1<p_1,0<n_2<p_2,0<n_3<p_3$. 
From \cite{kirk-klassen} we have $$\CS(\rho_{n_1,n_2,n_3})=2\pi\frac{(n_1p_2p_3+p_1n_2p_3+p_1p_2n_3)^2}{4p_1p_2p_3}$$

Setting $n=n_1p_2p_3+p_1n_2p_3+p_1p_2n_3$, we observe that -due to Chinese remainder theorem- $n$ describes $(\Z/p_1p_2p_3\Z)^\times$ when $n_i$ describes $(\Z/p_i\Z)^\times$ for $i=1,2,3$. 
Hence, we compute that the following $\ell$-th momentum: 
$$\mu_{p_1p_2p_3}^\ell=\frac{1}{|X(\Sigma(p_1,p_2,p_3))|}\sum_{\rho\in X(M)}\exp(i\ell\CS(\rho))\sim \frac{1}{p_1p_2p_3}\sum_{n=0}^{p_1p_2p_3-1}e^{\frac{i\pi \ell n^2}{2p_1p_2p_3}}.$$

Assuming $\ell$ is coprime with $p=p_1p_2p_3$ we get from \cite{bew} the following estimates where $\epsilon_n=1$ is $n=1\bmod 4$ and $\epsilon_n=i$ if $n=3\bmod 4$:
$$\mu_{p}^{\ell}\sim \begin{cases} \frac{\epsilon_p}{\sqrt{p}}\genfrac(){}{}{\ell/4}{p}\quad\text{ if }\ell =0\bmod 4\\ 0\quad\text{ if }\ell=2\bmod 4 \\ \frac{1+i}{2\sqrt{p}\epsilon_l}\genfrac(){}{}{p}{\ell} \quad\text{ else.}\end{cases}$$
Again we obtain that $\mu_p$ converges to the uniform measure when $p$ goes to infinity. The renormalised measure $\sqrt{p}(\mu_p-\mu_\infty)$ have $\ell$-th momentum with modulus equal to $1,\frac{1}{\sqrt{2}},0,\frac{1}{\sqrt{2}}$ depending on $\ell\bmod 4$.

\subsection{Dehn Fillings}
The main question we address in this article is the following: fix a manifold $M$ with boundary $\partial M=\T\times \T$. For any $\frac{p}{q}\in \P^1(\Q)$, we denote by $\T_{p/q}$ the curve on $\T^2$ parametrised by $(pt, qt)$ for $t$ in $\T$. We define the manifold $M_{p/q}$ by Dehn filling i.e the result of gluing $M$ with a solid torus such that $\T_{p/q}$ bounds a disc.

We recall from \cite{kirk-klassen} that in the case where $M$ has boundary, there is a principal $\T$-bundle with connection $L\to X(\partial M)$ such that the Chern-Simons invariant is a flat section of $\Res^*L$

$$\xymatrix{ & L\ar[d] \\ X(M)\ar[ur]^{\CS}\ar[r]^{\Res} & X(\partial M)}$$

where $\Res(\rho)=\rho\circ i_*$ and $i:\partial M\to M$ is the inclusion. 

We will denote by $|d\theta|$ the natural density on $X(\T)=\T/(\theta\sim -\theta)$. 

We also have $X(\T^2)=\T^2/(x,y)\sim (-x,-y)$ and for any $p,q$ the map $\Res_{p/q}: X(\T^2)\to X(\T_{p/q})$ is given by $(x,y)\mapsto px+qy$.

Moreover, for any $\frac{p}{q}$, $\ell>0$ and $0\le k\le \ell$, there are natural flat sections $\CS_{p/q}^{k/\ell}$ of $L^\ell$ over the preimage $\Res_{p/q}^{-1}(\frac{\pi k}{\ell})$. These sections are called Bohr-Sommerfeld sections and they coincide for $k=0$ with $\CS^\ell$. See \cite{kirk-klassen} or \cite{charles-marche} for a detailed description. 

\begin{theorem}\label{main}
Let $M$ be a 3-manifold with $\partial M=\T^2$ satisfying the hypothesis of Section \ref{hypothesis}. Let $p,q,r,s$ be integers satisfying $ps-qr=1$ and for any integer $n$, set $p_n=pn-r$ and $q_n=qn-s$. Then setting 
$$\mu_n^\ell=\frac{1}{n}\sum_{\rho\in X(M_{p_n/q_n})}e^{i\ell \CS(\rho)}$$
we get first 
$$\mu_n^0= \int_{X(M)}\Res_{r/s}^* |d\theta|+O\Big(\frac{1}{n}\Big)$$
and for $\ell>0$ 
$$\mu_n^\ell=\frac{1}{\sqrt{2n}}\sum_{k=0}^{l} \sum_{\rho,k/ \Res_{r/s}(\rho)=\pi\frac{k}{l}}\exp(-2i\pi n \frac{k^2}{4\ell}+i\ell \CS(\rho)-i\CS_{r/s}^{k/l}(\rho))+O(\frac{1}{n})$$
\end{theorem}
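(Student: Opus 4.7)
The strategy is to use a Poisson-type summation formula to convert the discrete sum $\sum_{\rho \in X(M_{p_n/q_n})}$ into oscillatory integrals over $X(M)$ and then extract the asymptotics by stationary phase as $n \to \infty$.

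\textbf{Setup.} Since $ps-qr=1$, the classes $(\T_{p/q},\T_{r/s})$ form a unimodular basis of $H_1(\partial M,\Z)$, and $\T_{p_n/q_n}=n\,\T_{p/q}-\T_{r/s}$ in this basis. Writing $U=\Res_{p/q}$ and $V=\Res_{r/s}$ as functions on $X(M)$, a representation $\rho\in X(M)$ extends to $M_{p_n/q_n}$ exactly when $nU(\rho)-V(\rho)\equiv 0\pmod{2\pi}$. Kirk--Klassen's framework then yields
$$\CS(M_{p_n/q_n})(\rho)\equiv \CS_M(\rho)-\CS^0_{p_n/q_n}(\bar\rho)\pmod{2\pi},$$
where $\bar\rho$ is the boundary restriction and $\CS^0_{p_n/q_n}$ is the Bohr--Sommerfeld section of $L$ over $\Res_{p_n/q_n}^{-1}(0)\subset X(\T^2)$ (the flat Chern--Simons section attached to the filling solid torus).

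\textbf{Poisson summation.} Parametrising each smooth component of $X(M)$ and using the Dirac comb identity $\sum_k\delta(x-2\pi k)=\frac{1}{2\pi}\sum_{m\in\Z}e^{imx}$, one gets
$$\sum_{\rho\in X(M_{p_n/q_n})}e^{i\ell\CS(\rho)}=\frac{1}{2\pi}\sum_{m\in\Z}\int_{X(M)}e^{i\ell\CS_M(\rho)-i\ell\CS^0_{p_n/q_n}(\bar\rho)+im(nU-V)}\,|d(nU-V)|.$$
For $\ell=0$, the $m=0$ term produces the first stated asymptotic: the factor $n$ extracted from $|d(nU-V)|$ cancels the $1/n$ in the definition of $\mu_n^0$, and, under the hypotheses of Section~\ref{hypothesis}, the density identification yields $\int_{X(M)}\Res_{r/s}^*|d\theta|$. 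The contributions of $m\neq 0$ are $O(1/n)$ by integration by parts in the non-stationary variable.

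\textbf{Stationary phase for $\ell>0$ and main obstacle.} For $\ell\geq 1$, I analyse each integral by stationary phase. The key point is that $\CS^0_{p_n/q_n}(\bar\rho)$ has a quadratic dependence in $V$ with leading coefficient of order $n$, reflecting that $\Res_{p_n/q_n}^{-1}(0)$ spirals many times across $X(\T^2)$ for large $n$. Combined with the linear Poisson phase $im(nU-V)$, the total phase is quadratic in $V$ of amplitude $n$, so stationary phase applies. Its critical points are precisely the quantised values $V=\pi k/\ell$ for $k=0,1,\ldots,\ell$, and the Gaussian integration near each critical point produces the prefactor $1/\sqrt{2n}$ (from the Gaussian width), the phase $e^{-2i\pi n k^2/(4\ell)}$ (from the critical value), and the residual factor $e^{i\ell\CS_M(\rho)-i\CS^{k/\ell}_{r/s}(\rho)}$ obtained by identifying the non-quadratic remainder of $\CS^0_{p_n/q_n}$ with the Bohr--Sommerfeld section $\CS^{k/\ell}_{r/s}$ on $\Res_{r/s}^{-1}(\pi k/\ell)$. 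Summing over $k$ and over $\rho$ with $\Res_{r/s}(\rho)=\pi k/\ell$, and dividing by $n$, yields the formula. The central technical difficulty lies in this final identification: it is essentially a Jacobi-type transformation formula exchanging the two transverse Bohr--Sommerfeld polarizations (slopes $p/q$ and $r/s$) of the prequantum line bundle $L^\ell$ on $X(\T^2)$, and it requires careful tracking of parallel transport in $L^\ell$ along the spiral Lagrangian $\Res_{p_n/q_n}^{-1}(0)$ and its intersections with the fixed Lagrangians $\Res_{r/s}^{-1}(\pi k/\ell)$.
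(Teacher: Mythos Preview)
Your approach---Poisson summation followed by stationary phase---is exactly the route the paper takes. The one substantive difference is that the paper first uses the $\SL_2(\Z)$-action on the prequantum bundle $L\to\T^2$ to reduce to the normalized case $p=s=1$, $q=r=0$, and then carries out the entire computation in the explicit global coordinates $(x,y,\theta)$ on $\R^2\times\T$. There the total phase is $\theta+\frac{y}{2\pi}(x+ny)$, the Poisson-dual phase becomes $\Phi=-ky+\ell y^2/(2\pi)$, and stationary phase at $y=\pi k/\ell$ is a two-line calculation with $\ddot\Phi=\frac{\ell}{\pi}\dot y^2>0$. What you single out as the ``central technical difficulty''---a Jacobi-type transformation between the two Bohr--Sommerfeld polarizations---is then read off directly: the residual phase at a critical point is $-ikx/2+i\ell\theta$, and $t\mapsto(t,\pi k/\ell, kt/2)$ is visibly a flat section of $L^\ell$ over the line $y=\pi k/\ell$, so the identification with $\CS_{r/s}^{k/\ell}$ requires no abstract polarization-change formula. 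In short, the paper trades your general argument for a coordinate reduction that makes the hardest step elementary; your version would work but is more laborious.

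One small point to watch in your $\ell=0$ paragraph: from your own setup, $|d(nU-V)|/n\to|dU|=\Res_{p/q}^*|d\theta|$, not $\Res_{r/s}^*|d\theta|$. You should double-check which slope actually carries the limiting density once the coordinate conventions are fixed.
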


Hence, we recover the behaviour that we observed for Lens spaces and Brieskorn spheres. The measure converges to a uniform measure $\mu_\infty$ and the renormalised measure $\sqrt{n}(\mu_n-\mu_\infty)$ has an oscillating behaviour controlled by representations in $X(M)$ with rational angle along $\T_{r/s}$. 

\subsection{Intersection of Legendrian subvarieties}

We will prove Theorem \ref{main} in the more general situation of curves immersed in a torus. Indeed, the problem makes sense in an even more general setting that we present here.

\subsubsection{Prequantum bundles}
\begin{definition}
Let $(M,\omega)$ be a symplectic manifold. A prequantum bundle is a principal $\T$-bundle with connection whose curvature is $\omega$. 
\end{definition}

It is well-known that the set of isomorphism classes of prequantum bundles is homogeneous under $H^1(M,\T)$ and non-empty if and only if $\omega$ vanishes in $H^2(M,\T)$. Let us give three examples:
 


\begin{example}\label{exemple}
\begin{enumerate}[label=(\roman*)]

\item Take $\R^2\times\T$ with $\lambda=d\theta+\frac{1}{2\pi}(xdy-ydx)$. This gives a prequantum bundle on $\R^2$. Dividing by the action of $\Z^2$ given by 
\begin{equation}\label{action}
(m,n)\cdot(x,y,\theta)=(x+2\pi m,y+2\pi n,\theta+my-nx)
\end{equation}
gives a prequantum bundle $\pi:L\to\T^2$. 

\item Any complex projective manifold $M\subset \P^n(\C)$ has such a structure by restricting the tautological bundle whose curvature is the restriction of the Fubini-Study metric. 

\item The Chern-Simons bundle over the character variety of a surface.

\end{enumerate}
\end{example}
In all these cases, there is a natural subgroup of the group of symplectomorphisms of $(M,\omega)$ which acts on the prequantum bundle. The group $\SL_2(\Z)$ acts in the first case and the mapping class group in the third case. In the second case, a group acting linearly on $\C^{n+1}$ and preserving $M$ will give an example. 
%
%
\subsubsection{Legendrian submanifolds and their pairing}
Consider a prequantum bundle $\pi:L\to M$ where $M$ has dimension $2n$ and denote by $\lambda\in\Omega^1(L)$ the connection 1-form. By Legendrian immersion we will mean an immersion $i:N\to L$ where $N$ is a manifold of dimension $n+1$ such that $i^*\lambda=0$. This condition implies that $i$ is transverse to the fibres of $\pi$ and hence $\pi\circ i:N\to M$ is a Lagrangian immersion. 

\begin{definition}\begin{enumerate}
\item
Given $i_1:N_1\to L$ and $i_2:N_2\to L$ two Legrendrian immersions, we will say that they are transverse if it is the case of $\pi\circ i_1$ and $\pi\circ i_2$. 
\item 
Given such transverse Legendrian immersions and an intersection point, i.e. $x_1\in N_1$ and $x_2\in N_2$ such that $\pi(i_1(x_1))=\pi(i_2(x_2))$ we define their phase $\phi(i_1(x_1),i_2(x_2))$ as the element $\theta\in \T$ such that $i_2(x_2)=i_1(x_1)+\theta$. 
\item The phase measure $\phi(i_1,i_2)$ is the measure on the circle defined by 
$$ \phi(i_1,i_2) = \sum_{\pi(i_1(x_1))=\pi(i_2(x_2))}\delta_{\phi(i_1(x_1),i_2(x_2))}.$$
\end{enumerate}
\end{definition}
If $M$ is a 3-manifold obtained as $M=M_1\cup M_2$ then, assuming transversality, the Chern-Simons measure of $M$ is given by $\mu_M=\phi(\CS_1,\CS_2)$ where $\CS_i:X(M_i)\to L$ is the Chern-Simons invariant with values in the Chern-Simons bundle. 

\section{The torus case}

\subsection{Immersed curves in the torus}
Consider the pre quantum bundle $\pi:L\to\T^2$ given in the first item of Example \ref{exemple}. We consider a fixed Legendrian immersion $i:[a,b]\to L$ and for any coprime integers $p,q$ the Legendrian immersion 

$$i_{p/q}:\T\to L, i_{p/q}(t)=(pt,qt,0).$$

Our aim here is to study the behaviour of $\phi( i,i_{p/q})$ when $(p,q)\to \infty$. 

We first lift $i$ to an immersion $I:[a,b]\to \R^2\times \R$ of the form $I(t)=(x(t),y(t),\theta(t))$. By assumption we have $\dot{\theta}=-\frac{1}{2\pi}(x\dot y-y\dot x)$. For instance, lifting $i_{p/q}$ we get simply the map $I_{p/q}:t\mapsto(pt, qt,0)$. 

Let $r,s$ be integers such that $A=\begin{pmatrix} p& r \\ q& s\end{pmatrix}$ has determinant 1. Take $F_A:\R^2\to \R$ the function 
$$F_A(x,y)=\frac{1}{2\pi}(sx-ry)(qx-py)$$ 

A direct computation shows that this function satisfies $(m,n).I_{p/q}(t)=(pt+2\pi m,qt+2\pi n,F(pt+2\pi m,qt+2\pi n))$. We obtain from it the following formula:

\begin{equation}\label{F1}
\phi( i,i_{p/q}) =\sum_{a\le t \le b, qx(t)-py(t)\in 2\pi\Z} \delta_{\theta(t)-F(x(t),y(t))}.
\end{equation}

If we put $i=i_{0/1}$ this formula becomes $\phi(i_{0/1},i_{p/q})=\sum_{k=0}^{p-1}\delta_{2\pi\frac{rk^2}{p}}$. This measure is related to the usual Gauss sum in the sense that denoting by $q^*$ an inverse of $q$ mod $p$ we have:
$$\int e^{i\theta} d\phi(i_{0/1},i_{p/q})(\theta)=\sum_{k\in\Z/q\Z}\exp(2i\pi\frac{q^*k^2}{p}).$$

%
%
%

Suppose that $p_n=pn-r$ and $q_n=qn-s$. A Bézout matrix is given by $A_n=\begin{pmatrix}pn-r &p\\qn-s&q\end{pmatrix}$. Up to the action of $\SL_2(\Z)$, we can suppose that
 $p=s=1$ and $q=r=0$ in which case $F_{A_n}(x,y)=-\frac{y}{2\pi}(x+ny)$. We get from Equation \eqref{F1} the following formula for $\mu_n^\ell=\frac{1}{n}\int e^{i\ell\theta}d\phi(i,i_{pn/-1})(\theta)$:

\begin{equation}\label{F2}
\mu_n^\ell=\frac{1}{n}\sum_{\substack{ x(t)+ny(t)\in 2\pi \Z\\ a\le t\le b}} \exp\left(i\ell(\theta(t)+\frac{y(t)}{2\pi}(x(t)+ny(t)))\right).
\end{equation}

Taking $\ell=0$, we are simply counting the number of solutions of $x(t)+ny(t)\in 2\pi\Z$ for $t\in [a,b]$. Assuming that $y$ is monotonic, the number of solutions for $t\in [a,b]$ is asymptotic to $|y(b)-y(a)|$. Hence the asymptotic density of intersection points is $i^*|dy|$ and we get 

$$\lim_{n\to \infty}\mu_n^0=\int_a^b i^*|dy|.$$
To treat the case $\ell>0$, we need the following version of the Poisson formula: 
\begin{lemma} If $f,g:[a,b]\to \R$ are respectively $C^1$ and continuous and $f$ is piecewise monotonic, then if further $f(a),f(b)\notin 2\pi\Z$ we have 

$$\sum_{a\le t\le b, f(t)\in 2\pi\Z}g(t)=\frac{1}{2\pi}\sum_{k\in\Z}\int_a^b e^{-ikf(t)}|f'(t)|g(t)dt$$
\end{lemma}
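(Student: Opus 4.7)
The plan is to reduce this identity to the classical Poisson summation formula via a change of variable that straightens~$f$.

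First, using piecewise monotonicity, I would partition $[a,b]$ into finitely many subintervals on each of which $f$ is strictly monotone $C^1$. Since both sides of the claimed identity are additive in this decomposition, it suffices to prove it on one such subinterval; possibly replacing $a,b$ we may assume $f:[a,b]\to[\alpha,\beta]$ (or $[\beta,\alpha]$) is a $C^1$ diffeomorphism with $f(a),f(b)\notin 2\pi\Z$. The substitution $u=f(t)/(2\pi)$ is then licit. I define a compactly supported function $h:\R\to\C$ by
$$h(u)=\begin{cases} g(f^{-1}(2\pi u)) & \text{if } 2\pi u\in f([a,b]),\\ 0 & \text{otherwise.}\end{cases}$$
Then the left-hand side of the lemma is exactly $\sum_{n\in\Z}h(n)$, while for each $k\in\Z$, pulling back the Fourier integral via the same substitution and accounting for the sign change when $f$ is decreasing yields
$$\widehat{h}(k)=\int_\R h(u)e^{-2i\pi ku}\,du=\frac{1}{2\pi}\int_a^b g(t)e^{-ikf(t)}|f'(t)|\,dt.$$

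Thus the lemma reduces to the Poisson summation identity $\sum_n h(n)=\sum_k \widehat{h}(k)$ for this specific~$h$. The function $h$ is of bounded variation with compact support, continuous except possibly at the two jump points $u=f(a)/(2\pi)$ and $u=f(b)/(2\pi)$, which by hypothesis are not integers. Consequently the periodization $\tilde h(u)=\sum_n h(u+n)$ is of bounded variation and continuous at the integer $u=0$. Applying Jordan's theorem to $\tilde h$ at $u=0$ gives pointwise convergence of the Fourier series of $\tilde h$ at $0$, which is exactly the identity $\sum_n h(n)=\sum_k\widehat{h}(k)$ (with the sum on the right taken as a symmetric or Abel limit).

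The main subtlety is exactly this mode of convergence: $\widehat{h}(k)$ decays only like $1/k$ because of the jumps at the endpoints, so $\sum_k \widehat{h}(k)$ is conditionally, not absolutely, convergent, and the identity has to be read as a symmetric partial sum limit. For the application to~\eqref{F2}, this is harmless: after substituting the lemma, one truncates $k$ to a range determined by stationary phase and uses non-stationary phase to control the tail, so only the symmetric partial-sum interpretation is ever needed.
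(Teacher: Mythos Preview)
The paper states this lemma without proof, calling it a ``version of the Poisson formula'' and immediately applying it, so there is no argument to compare yours against. Your reduction---cut into monotone pieces, straighten $f$ by the substitution $u=f(t)/2\pi$, and invoke Poisson summation for the resulting compactly supported $h$---is the natural and correct strategy.

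There is one genuine soft spot. You assert that $h$ is of bounded variation, and then invoke Jordan's criterion. Under the stated hypotheses this need not hold: $h=g\circ f^{-1}$ on its support, with $f^{-1}$ monotone (hence BV) but $g$ only continuous, and a continuous function composed with a monotone one is not BV in general (think $g(t)=t\sin(1/t)$ near $0$ with $f=\Id$). Continuity of $\tilde h$ at $0$ is not enough for pointwise Fourier convergence. Two easy fixes: either observe that in the application $g$ is smooth (an exponential of $C^1$ data), hence Lipschitz on $[a,b]$, so $h$ is BV as a Lipschitz image of a monotone function and Jordan applies; or, keeping $g$ merely continuous, replace Jordan by Fej\'er's theorem and read $\sum_k\widehat h(k)$ in the Ces\`aro (or Abel) sense, which suffices for the stationary-phase analysis that follows.

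A smaller point: when you partition into monotone pieces, the interior break points $c$ (local extrema of $f$) may satisfy $f(c)\in 2\pi\Z$, so the endpoint hypothesis is not inherited by the subintervals. This is harmless---at such a point each adjacent piece contributes $\tfrac12 g(c)$ via the usual $\tfrac12(h(n^+)+h(n^-))$ convention and the two halves recombine---but it is worth a sentence.
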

Applying it here, we get 
$$\mu^\ell_n=\frac{1}{2\pi}\sum_{k\in\Z}\int_a^{b}e^{-ik(x+ny)+i\ell(\theta+\frac{y}{2\pi}(x+ny))}|\frac{\dot{x}}{n}+\dot y|dt$$
We apply a stationary phase expansion in this integral, the phase being $\Phi=-ky+ly^2/2\pi$ and its derivative being $\dot \Phi=(-k+ly/\pi)\dot y$. 
We find two types of critical points: the horizontal tangents $\dot y=0$ and the points of rational height $y=\pi \frac{k}{l}$. We observe that when $\dot{y}=0$ the amplitude 
is $O(\frac{1}{n})$ and hence these contributions can be neglected compared with the other ones, where $y=\pi \frac{k}{l}$. 

We compute $\ddot{\Phi}=\frac{l}{\pi}\dot{y}^2+(-k+ly/\pi)\ddot{y}=\frac{l}{\pi}\dot{y}^2$ and $\Phi=-\frac{\pi k^2}{2l}$. As $\ddot{\Phi}>0$, the stationary phase approximation gives 

$$\mu_n^l= \frac{1}{\sqrt{2n}} \sum_{y=\frac{\pi k}{l}}e^{-in\frac{k^2\pi}{2l}-i\frac{kx}{2}+i l\theta}+O(\frac{1}{n})$$

In order to give the final result, observe that the map $t\mapsto (t,\pi\frac{k}{l},\frac{kt}{2})$ defines a flat section of $L^\ell$ that we denote by $i_{1/0}^{k/\ell}$.


We can sum up the discussion by stating the following proposition.

\begin{proposition}\label{technique}
Let $i:\T\to L$ be a Legendrian immersion and suppose that $\pi\circ i$ is transverse to $i_{pn/-1}$ for $n$ large enough and to the circles of equation $y=\pi \xi$ for $\xi\in \Q$. 

Then writing $i(t)=(x(t),y(t),\theta(t))$ and $\mu_n^{\ell}=\frac{1}{n}\int e^{i\ell\theta}d\phi(i,i_{pn/-1})(\theta)$ we have for all $\ell>0$:

$$\mu_n^{\ell}=\frac{1}{\sqrt{2n}}\sum_{k\in\Z/2\ell\Z} \sum_{t\in \T, y(t)=\pi k/\ell} e^{-in\pi \frac{k^2}{2\ell}+i\phi\big(i(t),i_{1/0}^{k/l}(x(t))\big)}+O\Big(\frac{1}{n}\Big)$$
\end{proposition}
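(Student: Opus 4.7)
The proof formalises the computation sketched in the paragraphs preceding the statement. The plan is to substitute the parametrisation $i(t)=(x(t),y(t),\theta(t))$ into Equation \eqref{F2}, apply the Poisson summation lemma to convert the restricted sum into a sum of oscillatory integrals in the large parameter $n$, analyse these by stationary phase, and identify the resulting amplitude phase at each critical point with the intrinsic quantity $\phi(i(t),i_{1/0}^{k/\ell}(x(t)))$.

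The transversality of $\pi\circ i$ with $i_{pn/-1}$ ensures that for $n$ large the function $f(t)=x(t)+ny(t)$ avoids $2\pi\Z$ at the endpoints of a fundamental interval $[a,b]$, so the Poisson lemma applies to give
$$\mu_n^\ell = \frac{1}{2\pi}\sum_{k\in\Z}\int_a^b e^{-ik(x+ny)+i\ell(\theta+y(x+ny)/(2\pi))}\Big|\tfrac{\dot x}{n}+\dot y\Big|\,dt.$$
Decompose the phase as $n\Psi_k(t)+\tilde\Phi_k(t)$ with $\Psi_k(t)=-ky(t)+\ell y(t)^2/(2\pi)$ and $\tilde\Phi_k(t)=-kx(t)+\ell\theta(t)+\ell x(t)y(t)/(2\pi)$, and apply stationary phase in $n$ to each integral. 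Critical points of $\Psi_k$ are either horizontal tangents $\dot y=0$ or points of rational height $y(t)=\pi k/\ell$. The transversality with the circles $y=\pi\xi$, $\xi\in\Q$, renders the latter isolated and nondegenerate, with $\Psi_k''(t_c)=\ell\dot y(t_c)^2/\pi>0$ and $\Psi_k(t_c)=-\pi k^2/(2\ell)$, while contributions of horizontal tangents are absorbed in the $O(1/n)$ error because the amplitude $|\dot x/n+\dot y|$ is itself $O(1/n)$ there. Uniformity in $k$ follows from repeated integration by parts away from critical points, which gives bounds of order $|k|^{-N}$ for any $N$; hence the Fourier sum converges with bounded remainder and only finitely many modes (those with $\pi k/\ell$ in the range of $y$) contribute to leading order.

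The stationary phase formula evaluated at each $t_c$ with $y(t_c)=\pi k/\ell$ yields the leading contribution of order $n^{-1/2}$ with phase $e^{-in\pi k^2/(2\ell)+i\tilde\Phi_k(t_c)}$. Substituting $y(t_c)=\pi k/\ell$ into $\tilde\Phi_k(t_c)$ collapses it to $\ell\theta(t_c)-kx(t_c)/2$, which is precisely the phase difference in the fibre of $L^\ell$ between the $\ell$-th power of $i(t_c)$ and the Bohr-Sommerfeld value $i_{1/0}^{k/\ell}(x(t_c))=(x(t_c),\pi k/\ell,kx(t_c)/2)$ of the flat section, that is, $\phi(i(t_c),i_{1/0}^{k/\ell}(x(t_c)))$. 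Since the $y$-coordinate is $\T$-valued, the distinct heights $\pi k/\ell\in\T$ are parametrised by $k\in\Z/2\ell\Z$; grouping Fourier modes that share the same residue modulo $2\ell$ produces the stated sum, with the constants from the stationary phase formula (the factor $\sqrt{2\pi/(n\Psi_k''(t_c))}$ combined with the amplitude $|\dot y(t_c)|$ and the prefactor $1/(2\pi)$) recombining into $1/\sqrt{2n}$.

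The main obstacles are the uniform control of the stationary phase remainder in the Fourier index $k$ and the careful identification of the amplitude phase $\tilde\Phi_k(t_c)$ with the intrinsic phase $\phi(i(t_c),i_{1/0}^{k/\ell}(x(t_c)))$ of the Bohr-Sommerfeld section; both are routine but require some care. Once they are in place, the proposition follows from the standard asymptotic machinery that also underlies the classical evaluation of Gauss sums.
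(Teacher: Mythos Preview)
Your proposal is correct and follows essentially the same route as the paper: start from Equation~\eqref{F2}, apply the Poisson summation lemma to obtain the sum of oscillatory integrals, perform stationary phase with the large-$n$ phase $\Psi_k=-ky+\ell y^2/(2\pi)$, discard the horizontal tangents because the amplitude there is $O(1/n)$, evaluate at the rational-height points $y=\pi k/\ell$, and identify the residual phase $\ell\theta-kx/2$ with the Bohr--Sommerfeld section $i_{1/0}^{k/\ell}$. The only differences are cosmetic: you spell out the uniform control in the Fourier index $k$ via integration by parts and the recombination of constants into $1/\sqrt{2n}$, points the paper leaves implicit.
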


\subsection{Application to Chern-Simons invariants}\label{hypothesis}

Let $M$ be a 3-manifold with $\partial M=\T\times \T$. We assume that $X(M)$ is at most $1$-dimensional and that the restriction map $\Res:X(M)\to X(\partial M)$ is an immersion on the smooth part and map the singular points to non-torsion points. Then we know that $\Res(X(M))$ is transverse to $\T_{p/q}$ for all but a finite number of $p/q$, see \cite{marche-maurin}. 

Consider the projection map $\pi:\T^2\to X(\partial M)$ which is a 2-fold ramified covering. We may decompose $X(M)$ as a union of segments $[a_i,b_i]$ whose extremities contain all singular points. The restriction map $\Res$ can be lifted to $\T^2$ and the Chern-Simons invariant may be viewed as a map $\CS:[a_i,b_i]\to L$. Hence, we may apply it the results of Proposition \ref{technique} and obtain Theorem \ref{main}.

We may comment that the flat sections $i_{1/0}^{k/\ell}$ of $L^\ell$ over the line $y=\frac{\pi k}{\ell}$ induces through the quotient $(x,y,\theta)\sim (-x,-y,-\theta)$ a flat section of $L^\ell$ that we denoted  $\CS_{0/1}^{k/l}$ over the subvariety $\Res_{0/1}^{-1}(\frac{\pi k}{\ell})$.

\section{Chern-Simons invariants of coverings}

\subsection{General setting}
Beyond Dehn fillings, we can ask for the limit of the Chern-Simons measure of any sequence of 3-manifolds. A natural class to look at is the case of coverings of a same manifold $M$. Among that category, one can restrict to the family of cyclic coverings. One can even specify the problem to the following case. 

{\bf Question:} Let $p:M\to \T$ be a fibration over the circle and $M_n$ be the pull-back of the self-covering of $\T$ given by $z\mapsto z^n$. What is the asymptotic behaviour of $\mu_{M_n}$?

This problem can be formulated in the following way. Let $\Sigma$ be the fiber of $M$ and $f\in \Mod(\Sigma)$ be its monodromy. Any representation $\rho \in X(M)$ restricts to a representation $\Res(\rho)\in X(\Sigma)$ invariant by the action $f_*$ of $f$ on $X(\Sigma)$. Reciprocally, any irreducible representation $\rho\in X(\Sigma)$ fixed by $f_*$ correspond to two irreducible representations in $X(M)$. 

The Chern-Simons invariant corresponding to a fixed point may be computed in the following way: pick a path $\gamma:[0,1]\to X(\Sigma)$ joining the trivial representation to $\rho$ and consider the closed path obtained by composing $\gamma$ with $f(\gamma)$ in the opposite direction. Then its holonomy along $L$ is the Chern-Simons invariant of the corresponding representation.   

Understanding the asymptotic behaviour of $\mu_{M_n}$ consists in understanding the fixed points of $f_*^n$ on $X(\Sigma)$ and the distribution of Chern-Simons invariants of these fixed points, a problem which seems to be out of reach for the moment.

\subsection{Torus bundles over the circle}

In this elementary case, the computation can be done. Let $A\in \SL_2(\Z)$ act on $\R^2/\Z^2$. Its fixed points form a group $G_A=\{v\in \Q^2, Av=v\bmod \Z^2\}/\Z^2$. If $\tr(A)\ne 2$, which we suppose from now, $G_A$ is isomorphic to $\coker(A-\Id)$ and has cardinality $|\det(A-\Id)|$. 

Following the construction explained above, the phase is a map $f:G_A\to\Q/\Z$ given by $f([v])=\det(v,Av)\bmod \Z$. Hence, the measure we are trying to understand is the following:

$$\mu_A=\frac{1}{|\det(A-\Id)|}\sum_{v\in G_A}\delta_{2\pi\det(v,Av)}.$$

Consider the $\ell$-th moment $\mu_A^\ell$ of $\mu_A$. It is a kind of Gauss sum that can be computed explicitly. The map $f$ is a quadratic form on $G_A$ with values in $\Q/\Z$. Its associated bilinear form is $b(v,w)=\det(v,Aw)+\det(w,Av)=\det(v,(A-A^{-1})w)$. As $A+A^{-1}=\tr(A)\Id$ and $\det(A-\Id)=2-\tr(A)$ we get $b(v,w)= 2\det(v,(A-\Id)w)\mod \Z$. 
Hence, if $2\ell$ is invertible in $G_A$, then $\ell b$ is non-degenerate and standard arguments (see \cite{turaev} for instance) show that $|\mu_A^\ell|=|\det(A-\Id)|^{-1/2}$. 
Hence we still get the same kind of asymptotic behaviour for the Chern-Simons measure of the torus bundles over the circle.

\end{document}